\newfont{\cyrr}{wncyr10}
\newcommand{\thmref}[1]{Theorem~\ref{#1}}
\newcommand{\lemref}[1]{Lemma~\ref{#1}}
\newtheorem{thm}{Theorem}
\newtheorem{lem}[thm]{Lemma}
\newcommand{\Z}{{\mathbb Z}}
\newcommand{\SL}{{\rm{SL}}}
\def\({\left(}
\def\){\right)}
\def\[{\left[}
\def\]{\right]}
\def\GL{{\rm GL}}
\def\G{{\rm G}}
\def\SL{{\rm SL}}
\def\S{{\rm S}}
\def\Li{{\rm Li}}
\def\N{\mathbb{N}}
\def\Q{\mathbb{Q}}
\def\F{\mathbb{F}}
\def\cP{\mathcal{P}}
\def\cS{\mathcal{S}}
\def\z{\zeta}
\renewcommand{\mod}{ \text{ mod }}
\newcommand{\ord}{\text{ord}}
\renewcommand{\vec}{\overrightarrow}
\title[Hecke eigenvalues of Ikeda lifts]{On divisibility of Hecke eigenvalues of Ikeda lifts}
\author{Sanoli Gun and Sunil Naik}
\address{Sanoli Gun  \newline
	The Institute of Mathematical Sciences, 
	A CI of Homi Bhabha National Institute, 
	CIT Campus, Taramani, 
	Chennai 600 113, 
	India.}
\email{sanoli@imsc.res.in}
\address{Sunil Naik \newline
	Department of Mathematics,
	Queen's University, Jeffrey Hall, 
	99 University Avenue, 
	Kingston, ON K7L3N6, 
	Canada}
\email{naik.s@queensu.ca}
\begin{document}
	
\hfuzz 5pt

\subjclass[2020]{11F11, 11F30, 11F46, 11F80, 11N56, 11R45}

\keywords{Ikeda lifts, Galois representations, Cyclotomic fields, 
Chebotarev density theorem}

\maketitle

\begin{abstract}
In this article, we estimate the density of the set of primes $p$ 
such that the $p$-th Hecke eigenvalue of an Ikeda lift is divisible 
by a fixed positive integer. 
One of the main ingredients involves the study of abelian subfields 
of fixed fields of the kernel of Galois representations 
attached to elliptic Hecke eigenforms. Further, we study 
the distribution of Fourier coefficients of elliptic Hecke eigenforms 
in arithmetic progressions.
\end{abstract}
	
\section{Introduction and Statements of Results}\label{S1}
Throughout the article, let $k, n$ denote 
even positive integers with $k > n+1$ and 
$\Gamma_n = Sp_n(\Z) \subseteq {\rm GL}_{2n}(\Z)$ 
denote the full Siegel modular group of degree $n$. 
Also let $S_k(\Gamma_n)$ denote the space of Siegel cusp forms 
of weight $k$ and degree $n$ for $\Gamma_n$. 
Let $F \in S_k(\Gamma_n)$ be an Ikeda lift and 
for $m \in \N$, $\lambda_F(m)$ be the Hecke eigenvalue of the 
Hecke operator $T(m)$ with respect to $F$. 
It is well-known that $\lambda_F(m)$'s are real algebraic integers 
(see \cite[Theorem 4.1]{HK}, \cite{SM}).
In a recent article \cite{GN}, the authors proved that 
$\lambda_F(p) > 0 $
for all primes $p$. This generalizes a well-known 
result of Breulmann \cite{SB}. 
Further, if $F$ is an Ikeda lift of a 
normalized Hecke eigenform $f \in S_{2k-n}(\Gamma_1)$, 
then we have (see \cite{GN, Mu})
$$	
\lambda_F(p)  ~=~ \prod_{i=1}^{\frac{n}{2}}  
\( a_f(p) + p^{k-i} + p^{k-n-1+i}\),
$$ 
where $a_f(p)$ denotes the $p$-th Fourier coefficient of $f$.
In this article, we study divisibility properties of 
Hecke eigenvalues of Ikeda lifts. For the sake of simplicity, 
we suppose that $\lambda_F(m)$'s are rational integers.
Before moving further, let us introduce few notations. 
Let $S$ be a set of primes. We say that $S$ has a density, 
say $\delta$, if the limit
$$
\lim_{x \to \infty} \frac{\#\{ p \leq x : p \in S\}}{\pi(x)}
$$
exists and is equal to $\delta$. Here $\pi(x)$ denotes 
the number of primes less than or equal to $x$.
For any integer $d>1$, set
$$
\Pi_F(d) ~=~ \{ p ~:~  p \nmid d 
\text{  and   } 
\lambda_F(p) \equiv 0 ~(\mod d)\}
\phantom{m}\text{and} \phantom{m} 
\pi_F(x,d) ~=~ \#\{ p \leq x ~:~ p \in \Pi_F(d)\}.
$$
In this article, we prove the following theorem.

\begin{thm}\label{thm2}
Let $F \in S_k(\Gamma_n)$ be an Ikeda lift of a 
normalized Hecke eigenform in $S_{2k-n}(\Gamma_1)$ 
with integer Fourier coefficients.
Also let $\ell$ be a prime number and $m$ be a positive integer.
Then the density of the set $\Pi_F(\ell^m)$ exists,
denoted by $\delta_F(\ell^m)$, and we have
\begin{equation*}
\delta_F(\ell) ~=~ \frac{n}{2}  
\(\frac{1}{\ell} + O\(\frac{1}{\ell^2}\) \)
\phantom{mm}\text{and}\phantom{mm}
\delta_F(\ell^m) 
~\ll~
\begin{cases}
\frac{1}{\ell^m} & \text{if $n=2$}, \\
\frac{m^2}{\ell^\frac{3m}{n}} & \text{if $n > 2$}.
\end{cases}
\end{equation*}
Further, there exists a positive constant $c$ such that
\begin{equation*}
\pi_F\(x, \ell^m\) ~=~ \delta_{F}(\ell^m) \pi(x) 
~+~ O\(x \exp\(- (\log x)^{\frac13}\)\)
\end{equation*}
uniformly for $\ell^m \leq (\log x)^{c}$.
Moreover, if we assume that the generalized Riemann hypothesis (GRH) is true, then we have
$$
\pi_F(x, \ell^m) ~=~ \delta_F(\ell^m) \( \pi(x) 
~+~ O\(\ell^{4m} x^{\frac{1}{2}} \log (\ell^m x)\)\)
$$
for $x \geq 2$.
\end{thm}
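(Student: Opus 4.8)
The plan is to recognise $\Pi_F(\ell^m)$ as a Chebotarev set and then invoke the GRH-conditional effective Chebotarev density theorem. Let $\bar\rho=\bar\rho_{f,\ell^m}\colon G_{\Q}\to\GL_2(\Z/\ell^m\Z)$ be the reduction mod $\ell^m$ of the $\ell$-adic Deligne representation attached to $f$, which makes sense with these coefficients since $f$ has rational integer Fourier coefficients; it is unramified outside $\ell$, with $\mathrm{tr}\,\bar\rho(\mathrm{Frob}_p)\equiv a_f(p)$ and $\det\bar\rho(\mathrm{Frob}_p)\equiv p^{2k-n-1}\pmod{\ell^m}$ for $p\ne\ell$. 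Let $\bar\chi\colon G_{\Q}\to(\Z/\ell^m\Z)^\times$ be the mod $\ell^m$ cyclotomic character, let $L$ be the fixed field of $\ker(\bar\rho\oplus\bar\chi)$, and put $G=\mathrm{Gal}(L/\Q)$. From the product formula $\lambda_F(p)=\prod_{i=1}^{n/2}\bigl(a_f(p)+p^{k-i}+p^{k-n-1+i}\bigr)$ one reads off that
$$\min\bigl(v_\ell(\lambda_F(p)),m\bigr)=\sum_{i=1}^{n/2}\min\Bigl(v_\ell\bigl(a_f(p)+p^{k-i}+p^{k-n-1+i}\bigr),m\Bigr),$$
and the right side depends only on $(a_f(p)\bmod\ell^m,\,p\bmod\ell^m)$, hence only on the conjugacy class of $\mathrm{Frob}_p$ in $G$ (traces and the cyclotomic part being conjugation invariant). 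If $C\subseteq G$ is the union of those classes on which this quantity is $\ge m$, then $\Pi_F(\ell^m)=\{p\ne\ell:\mathrm{Frob}_p\in C\}$ and, by the Chebotarev density theorem, $\delta_F(\ell^m)=|C|/|G|$.

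Next I would bound the invariants of $L$. The embedding $G\hookrightarrow\{(g,c)\in\GL_2(\Z/\ell^m\Z)\times(\Z/\ell^m\Z)^\times:\det g=c^{\,2k-n-1}\}$ gives $[L:\Q]=|G|\le(2k-n-1)\bigl|\GL_2(\Z/\ell^m\Z)\bigr|\ll_F\ell^{4m}$, and since $f$ has level one, $L/\Q$ is unramified outside $\ell$, so the standard bound for discriminants of fields ramified at a single prime yields $\log|d_L|\ll[L:\Q]\bigl(\log\ell+\log[L:\Q]\bigr)\ll_F\ell^{4m}m\log\ell$. Now apply the effective Chebotarev density theorem of Lagarias--Odlyzko and Serre: under GRH for $\zeta_L$,
$$\#\{p\le x:\mathrm{Frob}_p\in C\}=\frac{|C|}{|G|}\,\mathrm{Li}(x)+O\!\left(\frac{|C|}{|G|}\,x^{1/2}\log\bigl(|d_L|\,x^{[L:\Q]}\bigr)+\log|d_L|\right).$$
With the bounds above, $\log\bigl(|d_L|\,x^{[L:\Q]}\bigr)\ll_F\ell^{4m}\bigl(m\log\ell+\log x\bigr)\ll_F\ell^{4m}\log(\ell^m x)$; replacing $\mathrm{Li}(x)$ by $\pi(x)$ at a cost of $O(x^{1/2}\log x)$ permitted by RH, the first error term becomes $\ll_F\delta_F(\ell^m)\,\ell^{4m}x^{1/2}\log(\ell^m x)$, which is exactly the shape asserted.

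The one delicate point — and the main obstacle — is the additive term $\log|d_L|\ll_F\ell^{4m}m\log\ell$, which does not carry the factor $\delta_F(\ell^m)$ and so must be absorbed by hand, uniformly in $x\ge2$. I would argue by a dichotomy. For $x\le c_0\,\delta_F(\ell^m)^2\ell^{8m}$ (a suitable constant $c_0=c_0(F)$) one has $\delta_F(\ell^m)\,\ell^{4m}x^{1/2}\log(\ell^m x)\ge x$, and since $\pi_F(x,\ell^m)$ and $\delta_F(\ell^m)\pi(x)$ are both $\le\pi(x)\le x$, the estimate holds trivially; for larger $x$, $\log|d_L|\ll_F\ell^{4m}m\log\ell\ll_F\delta_F(\ell^m)\,\ell^{4m}x^{1/2}\log(\ell^m x)$ provided $\delta_F(\ell^m)\gg\ell^{-2m}$, and these two ranges cover $[2,\infty)$ precisely when $\delta_F(\ell^m)\gg\ell^{-2m}$. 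For all but finitely many $\ell$ this lower bound is clear: $\delta_F(\ell^m)$ is at least the density of primes $p$ with $\ell^m\mid a_f(p)+p^{k-1}+p^{k-n}$, which one shows is $\gg\ell^{-m}$ by counting, for $\asymp\ell^m$ units $c$, the matrices in $\GL_2(\Z/\ell^m\Z)$ with characteristic polynomial $(X+c^{k-1})(X+c^{k-n})$, using that a level-one eigenform has no complex multiplication and hence (by the large-image theorem of Serre, Swinnerton-Dyer and Ribet) $\bar\rho_{f,\ell}$ has full image for almost all $\ell$. The finitely many remaining $\ell$ are absorbed into the implied constant, since for each fixed one the claim reduces to ordinary effective Chebotarev with an $(\ell,m)$-dependent constant. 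Thus the genuine work is not a single hard step but the uniform bookkeeping of the $\ell$- and $m$-dependence and of these degenerate corners; the deep external inputs — the existence and ramification of $\bar\rho_{f,\ell^m}$ and the large-image theorem — are quoted rather than reproved.
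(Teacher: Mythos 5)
Your Chebotarev setup is, in substance, the same as the paper's: the observation that the condition $\ell^m\mid\lambda_F(p)$ is determined by $(a_f(p)\bmod\ell^m,\ p\bmod\ell^m)$, via the factorisation $\lambda_F(p)=\prod_{i=1}^{n/2}(a_f(p)+p^{k-i}+p^{k-n-1+i})$, and hence is a union-of-conjugacy-classes condition for the compositum of the fixed field of $\ker\tilde\rho_{\ell^m,f}$ with $\Q(\zeta_{\ell^m})$ (your $\bar\rho\oplus\bar\chi$), is exactly the paper's route through its Theorem~\ref{thm3}. Your degree bound $[L:\Q]\ll\ell^{4m}$ and conductor-discriminant bound $\log|d_L|\ll\ell^{4m}\log\ell^{m}$ match the paper's \eqref{eq|Imgrholm|}--\eqref{equpDisc}. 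One small slip: your displayed identity $\min(v_\ell(\lambda_F(p)),m)=\sum_i\min(v_\ell(\cdots),m)$ is false in general (take two factors each with $v_\ell=m$), but the statement you actually need --- that the truth of $v_\ell(\lambda_F(p))\ge m$ is decided mod $\ell^m$ --- is equivalent to $\sum_i\min(v_\ell(\cdots),m)\ge m$, which is correct; this does not affect the argument.

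The genuine gaps are the asymptotics for the density, which are a large part of the statement and occupy most of the paper's proof, and which your proposal omits entirely. You assert existence of $\delta_F(\ell^m)=|C|/|G|$ but never estimate it. To prove $\delta_F(\ell)=\frac n2(\frac1\ell+O(\frac1{\ell^2}))$ the paper sums $\delta_{u,w}(\ell)\sim\ell^{-2}$ over the $N_{g_u}(\ell)$ roots of $g_u(w)=\prod_i(w+u^{k-i}+u^{k-n-1+i})$ modulo $\ell$ and over $u\in(\Z/\ell\Z)^\times$; the key input is that $N_{g_u}(\ell)=\frac n2$ unless $u$ has multiplicative order $\le n$ mod $\ell$, and there are at most $n^2$ such $u$, giving $\sum_u N_{g_u}(\ell)=\frac n2\ell(1+O(1/\ell))$. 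For the upper bound $\delta_F(\ell^m)\ll m^2/\ell^{3m/n}$ (when $n>2$) the paper bounds $N_{g_u}(\ell^m)$ by partitioning a root $w$ according to the vector of $\ell$-adic valuations $t_i=v_\ell(w-\gamma_{u,\sigma(i)})$, reduces to the two largest parts $(t_1,t_2)$, shows $t_2\ge1$ forces $u$ to have small order modulo $\ell^{\lfloor(t_2+1)/2\rfloor}$, and then uses the combinatorial minimum $\min\{s_1+\lfloor(s_2+1)/2\rfloor:\vec s\in\cP_{n/2}(m)\}\ge 3m/n$ to get the exponent $3m/n$. None of this appears in your proposal, and without it the claimed bounds on $\delta_F(\ell^m)$ are unproved. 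You also skip the unconditional uniform estimate $\pi_F(x,\ell^m)=\delta_F(\ell^m)\pi(x)+O(x\exp(-(\log x)^{1/3}))$ for $\ell^m\le(\log x)^c$, which in the paper requires Stark's lower bound on the possible Siegel zero of $\zeta_{L_{\ell^m}}$ together with the unconditional Lagarias--Odlyzko theorem; it is not a corollary of the GRH argument.

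On the point you call ``the main obstacle'' (the additive $\log|d_L|$ error in Lagarias--Odlyzko), the paper sidesteps it by citing the Murty--Murty form of GRH--Chebotarev, under which the error already carries the factor $|C|/|G|$; your dichotomy on $x$ (small $x$ trivially, large $x$ absorbing $\log|d_L|$ using $\delta_F(\ell^m)\gg\ell^{-2m}$) is a valid workaround, but note that a clean lower bound for $\delta_F(\ell^m)$ uniform in $m$ is itself not proved in your sketch and would need to be established --- the paper does not supply one because it does not need it. So, overall: same structural idea for the Chebotarev step, but the density asymptotics (the core quantitative content of the theorem) and the unconditional range are missing.
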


One of the main ingredients in proving \thmref{thm2} is
to study abelian subfields of the kernel of 
Galois representations attached to elliptic Hecke eigenforms. 
Let $f$ be a normalized Hecke eigen cusp form of weight $k$ 
for $\text{SL}_2(\Z)$ having integer 
Fourier coefficients and $\G  = \text{Gal}\(\overline{\Q}/\Q\)$. 
By the work of Deligne \cite{PD}, 
there exists a continuous representation
\begin{equation*}
\rho_{d,f} ~:~ {\G} ~\rightarrow~ {\GL}_2\(\prod_{\ell | d} \Z_\ell\)
\end{equation*}
which is unramified outside $d$. 
Further, if $p \nmid d$, then 
$$
\text{tr}\rho_{d,f}(\sigma_p) ~=~ a_f(p) 
\phantom{mm}\text{and}\phantom{mm} 
\text{det}\rho_{d,f}(\sigma_p) = p^{k-1},
$$
where $\sigma_p$ is a Frobenius element of $p$ in $\G$.
Denote by $\tilde{\rho}_{d,f}$, 
the reduction of $\rho_{d,f}$ modulo $d$ i.e.
\begin{equation*}
\tilde{\rho}_{d,f} ~:~ 
{\G} \xrightarrow[]{\rho_{d,f}} {\GL}_2\(\prod_{\ell | d} \Z_\ell\)
\twoheadrightarrow {\GL}_2(\Z/ d\Z).
\end{equation*}
Let $K_d$ be the subfield of $\overline{\Q}$ fixed by the kernel 
of $\tilde{\rho}_{d,f}$. 
In this set-up, we prove the following theorem.
\begin{thm}\label{thm4}
Let $\ell$ be a prime number and $m$ be a positive integer.
Also let $A_{\ell^m}$ be the subfield of $\Q\(\z_{\ell^m}\)$ 
of degree $\frac{\varphi\(\ell^m\)}{\(k-1, \varphi(\ell^m)\)}$, 
where $\zeta_{\ell^m} = e^{\frac{2\pi i}{\ell^m}}$.
Then we have 
$$
A_{\ell^m} ~\subseteq~ K_{\ell^m} \cap \Q\(\z_{\ell^m}\)
$$ 
and equality holds if $\ell$ is sufficiently large. 
In particular, we have $\Q\(\z_{2^m}\) \subseteq K_{2^m}$. 
Further, $A_{\ell^m}$ is the maximal abelian subfield of $K_{\ell^m}$ 
if $\ell$ is sufficiently large.
\end{thm}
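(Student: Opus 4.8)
The plan is to deduce everything from two structural facts about the mod $\ell^m$ representation $\tilde\rho_{\ell^m,f}$: its determinant is a fixed power of the cyclotomic character, and (once $\ell$ is large) its image is as large as this determinant constraint permits.

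First I would pin down the determinant. Since $\det\rho_{d,f}(\sigma_p)=p^{k-1}$ for all $p\nmid d$ and the Frobenii are dense in $\G$ by the Chebotarev density theorem, the continuous character $\det\rho_{\ell^m,f}$ coincides with $\chi^{k-1}$, where $\chi$ is the $\ell$-adic cyclotomic character; reducing modulo $\ell^m$ gives $\det\tilde\rho_{\ell^m,f}=\chi_{\ell^m}^{k-1}$, where $\chi_{\ell^m}\colon\G\twoheadrightarrow(\Z/\ell^m\Z)^*$ is the mod $\ell^m$ cyclotomic character, which induces the canonical isomorphism $\mathrm{Gal}(\Q(\z_{\ell^m})/\Q)\cong(\Z/\ell^m\Z)^*$. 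Now $\ker\tilde\rho_{\ell^m,f}\subseteq\ker\det\tilde\rho_{\ell^m,f}=\ker\chi_{\ell^m}^{k-1}$, so the fixed field of $\ker\chi_{\ell^m}^{k-1}$ lies inside $K_{\ell^m}$; and this fixed field, being cut out by a character of $\mathrm{Gal}(\Q(\z_{\ell^m})/\Q)$ with image $\big((\Z/\ell^m\Z)^*\big)^{k-1}$ of order $\varphi(\ell^m)/(k-1,\varphi(\ell^m))$, is precisely the subfield $A_{\ell^m}\subseteq\Q(\z_{\ell^m})$. This already yields the inclusion $A_{\ell^m}\subseteq K_{\ell^m}\cap\Q(\z_{\ell^m})$ with no hypothesis on $\ell$. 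When $\ell=2$, the weight $k$ is even (otherwise $S_k(\SL_2(\Z))=0$), so $k-1$ is odd, $(k-1,\varphi(2^m))=1$, and $A_{2^m}=\Q(\z_{2^m})$; this is the ``in particular'' assertion.

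For the opposite inclusion and the maximality statement, assume $\ell$ large and invoke the classical large image results. Since $f$ has level one it is non-CM, so by results of Serre, Swinnerton-Dyer and Ribet there is a constant $\ell_0(f)\geq5$ with $\mathrm{Im}\,\tilde\rho_{\ell,f}\supseteq\SL_2(\F_\ell)$ for all $\ell\geq\ell_0(f)$. For such $\ell$, the closure of the commutator subgroup of $\mathrm{Im}\,\rho_{\ell,f}$ lies in $\SL_2(\Z_\ell)$ and, as $\SL_2(\F_\ell)$ is perfect for $\ell\geq5$, reduces modulo $\ell$ onto $\SL_2(\F_\ell)$; since the only closed subgroup of $\SL_2(\Z_\ell)$ surjecting onto $\SL_2(\F_\ell)$ is $\SL_2(\Z_\ell)$ itself, we conclude $\SL_2(\Z/\ell^m\Z)\subseteq G_{\ell^m}:=\mathrm{Im}\,\tilde\rho_{\ell^m,f}=\mathrm{Gal}(K_{\ell^m}/\Q)$ for every $m$. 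Now pure group theory finishes the argument: $\SL_2(\Z/\ell^m\Z)$ is perfect for $\ell\geq5$, so the commutator subgroup $[G_{\ell^m},G_{\ell^m}]$, which contains $[\SL_2,\SL_2]=\SL_2(\Z/\ell^m\Z)$ and is contained in $\ker(\det|_{G_{\ell^m}})=\SL_2(\Z/\ell^m\Z)$, equals $\SL_2(\Z/\ell^m\Z)$; hence $G_{\ell^m}^{\mathrm{ab}}\cong\det(G_{\ell^m})=\big((\Z/\ell^m\Z)^*\big)^{k-1}$. Therefore the maximal abelian subextension of $K_{\ell^m}/\Q$ is the fixed field of $[G_{\ell^m},G_{\ell^m}]$, that is, of $\ker\chi_{\ell^m}^{k-1}$, which by the previous paragraph is $A_{\ell^m}$. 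This is exactly the statement that $A_{\ell^m}$ is the maximal abelian subfield of $K_{\ell^m}$; in particular $K_{\ell^m}\cap\Q(\z_{\ell^m})$, being abelian over $\Q$, sits inside $A_{\ell^m}$, and combined with the previous inclusion we obtain $A_{\ell^m}=K_{\ell^m}\cap\Q(\z_{\ell^m})$.

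The only ingredient here that is not formal is the large image input in the third paragraph: identifying (and, if an effective version is desired, bounding) the finitely many primes $\ell$ for which $\mathrm{Im}\,\tilde\rho_{\ell,f}$ fails to contain $\SL_2(\F_\ell)$, and checking that the lift from mod $\ell$ to mod $\ell^m$ via the commutator argument is uniform in $m$ for $\ell\geq5$. The determinant computation and the abelianization of $G_{\ell^m}$ are routine.
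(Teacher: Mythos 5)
Your proof is correct and follows essentially the same strategy as the paper: cut out the abelian quotient of $\mathrm{Gal}(K_{\ell^m}/\Q)$ via the determinant, identify it with $A_{\ell^m}$, and use perfectness of $\SL_2(\Z/\ell^m\Z)$ together with the large-image theorem to show nothing larger inside $K_{\ell^m}$ can be abelian. The one small variation is in the first inclusion: you identify $\det\tilde\rho_{\ell^m,f}$ directly with the $(k-1)$-st power of the mod-$\ell^m$ cyclotomic character, which makes the containment $B_{\ell^m}\subseteq\Q(\z_{\ell^m})$ immediate, whereas the paper argues more indirectly that the fixed field $B_{\ell^m}$ of $\S_{\ell^m}$ is an abelian extension ramified only at $\ell$, hence cyclotomic, and then invokes uniqueness of subfields of $\Q(\z_{\ell^M})$ of a given degree. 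Both routes arrive at $B_{\ell^m}=A_{\ell^m}$, and the remainder of your argument (abelianization of $G_{\ell^m}$, the $\ell=2$ case via $k$ even so $k-1$ odd, and the lift of the large-image statement from mod $\ell$ to mod $\ell^m$) matches the paper's proof, merely supplying a bit more of the standard detail that the paper delegates to its Section 2.3.
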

We also prove the following lemma.		
\begin{lem}\label{LAlm}
Let $\ell$ be a prime number and $m \in \N$. 
Then we have $A_{\ell^m} \subseteq A_{\ell^{m+1}}$. Further,
$A_{\ell^m} = A_{\ell}$ for $m \leq \nu_\ell(k-1)+1$
and $[ A_{\ell^{m+1}} :  A_{\ell^{m}}] = \ell$ 
for $m \geq \nu_\ell(k-1)+1$. 
Here $\nu_\ell$ denotes the $\ell$-adic valuation.
\end{lem}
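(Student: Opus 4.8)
The plan is to work purely with cyclotomic fields, since $A_{\ell^m}$ is by definition the subfield of $\Q(\z_{\ell^m})$ of degree $\varphi(\ell^m)/(k-1,\varphi(\ell^m))$. First I would recall that $\mathrm{Gal}(\Q(\z_{\ell^m})/\Q) \cong (\Z/\ell^m\Z)^\times$, and that $A_{\ell^m}$ corresponds under this isomorphism to the subgroup $H_m = \{ x^{k-1} : x \in (\Z/\ell^m\Z)^\times\}$, the group of $(k-1)$-th powers. Indeed, the fixed field of $H_m$ has degree $[(\Z/\ell^m\Z)^\times : H_m]$, and since $(\Z/\ell^m\Z)^\times$ is cyclic for $\ell$ odd (and we handle $\ell=2$ separately), this index equals $(k-1,\varphi(\ell^m))$, matching the stated degree; so $A_{\ell^m}$ is the fixed field of the group of $(k-1)$-th powers. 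The containment $A_{\ell^m} \subseteq A_{\ell^{m+1}}$ then follows from the compatibility of these objects: the natural surjection $(\Z/\ell^{m+1}\Z)^\times \twoheadrightarrow (\Z/\ell^m\Z)^\times$ carries $(k-1)$-th powers to $(k-1)$-th powers, so the inflation of $A_{\ell^m}$ sits inside $A_{\ell^{m+1}}$; concretely $\Q(\z_{\ell^m}) \cap A_{\ell^{m+1}} = A_{\ell^m}$ and hence $A_{\ell^m}\subseteq A_{\ell^{m+1}}$.

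Next I would pin down the degrees $[A_{\ell^m}:\Q] = \varphi(\ell^m)/(k-1,\varphi(\ell^m))$ as a function of $m$. Write $v = \nu_\ell(k-1)$. For $\ell$ odd, $\varphi(\ell^m) = \ell^{m-1}(\ell-1)$, so $\nu_\ell(\varphi(\ell^m)) = m-1$, and therefore $(k-1,\varphi(\ell^m)) = \ell^{\min(v,\,m-1)} \cdot (k-1,\ell-1)_{\ell'}$ where the last factor (the prime-to-$\ell$ part of $(k-1,\ell-1)$) is independent of $m$. Consequently
\begin{equation*}
[A_{\ell^m}:\Q] ~=~ \frac{\ell^{m-1}(\ell-1)}{\ell^{\min(v,\,m-1)}\,(k-1,\ell-1)_{\ell'}} ~=~ \ell^{\,m-1-\min(v,m-1)} \cdot \frac{\ell-1}{(k-1,\ell-1)_{\ell'}}.
\end{equation*}
For $m-1 \leq v$, i.e. $m \leq v+1 = \nu_\ell(k-1)+1$, the exponent $m-1-\min(v,m-1) = 0$, so $[A_{\ell^m}:\Q]$ is constant; combined with the tower $A_\ell \subseteq A_{\ell^2} \subseteq \cdots$ this forces $A_{\ell^m} = A_\ell$ in that range. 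For $m \geq v+1$ we have $\min(v,m-1) = v$, so $[A_{\ell^{m+1}}:\Q] = \ell\,[A_{\ell^m}:\Q]$, which by multiplicativity of degrees in the tower $A_{\ell^m}\subseteq A_{\ell^{m+1}}$ gives $[A_{\ell^{m+1}} : A_{\ell^m}] = \ell$. The case $\ell = 2$ is genuinely different because $(\Z/2^m\Z)^\times$ is not cyclic for $m \geq 3$; however $k$ is even so $k-1$ is odd, hence $(k-1,\varphi(2^m)) = (k-1, 2^{m-1}) = 1$ for all $m$, which means $A_{2^m} = \Q(\z_{2^m})$ has full degree $\varphi(2^m)$, and the claimed relations ($A_{2^m}\subseteq A_{2^{m+1}}$, and $[A_{2^{m+1}}:A_{2^m}] = 2 = \ell$ for all $m$, with $\nu_2(k-1)=0$ so the threshold is $m \geq 1$) are immediate from the cyclotomic tower $\Q(\z_{2^m}) \subseteq \Q(\z_{2^{m+1}})$.

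The main obstacle, and the only place requiring real care, is the structure of $(\Z/\ell^m\Z)^\times$: the clean formula $[(\Z/\ell^m\Z)^\times : H_m] = (k-1,\varphi(\ell^m))$ uses cyclicity, which fails at $\ell = 2$, so that case must be separated out as above. A secondary point is to make sure the "inflation" identification $\Q(\z_{\ell^m})\cap A_{\ell^{m+1}} = A_{\ell^m}$ is justified — this follows because under $\mathrm{Gal}(\Q(\z_{\ell^{m+1}})/\Q(\z_{\ell^m})) \hookrightarrow \ker\big((\Z/\ell^{m+1}\Z)^\times \to (\Z/\ell^m\Z)^\times\big)$, that kernel is an $\ell$-group contained in $H_{m+1}$ whenever $m \geq v+1$ and maps onto the relevant quotient otherwise — but for the containment statement alone it suffices to observe that the preimage of $H_m$ under the reduction map contains $H_{m+1}$, giving directly $A_{\ell^m} \subseteq A_{\ell^{m+1}}$ without computing the intersection. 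With the degree computation in hand, everything else is bookkeeping with $\ell$-adic valuations.
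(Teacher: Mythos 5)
Your proposal is correct and follows essentially the same route as the paper: both arguments reduce to the degree formula $[A_{\ell^m}:\Q] = \varphi(\ell^m)/(k-1,\varphi(\ell^m))$, the $\ell$-adic valuation bookkeeping in $m$, and the separate (trivial) treatment of $\ell=2$ via $A_{2^m}=\Q(\z_{2^m})$. The only cosmetic difference is that you justify $A_{\ell^m}\subseteq A_{\ell^{m+1}}$ by tracking $(k-1)$-th powers under the reduction map $(\Z/\ell^{m+1}\Z)^\times\twoheadrightarrow(\Z/\ell^m\Z)^\times$, whereas the paper appeals to cyclicity and the uniqueness of subfields of each degree in $\Q(\z_{\ell^{m+1}})$ — two phrasings of the same fact.
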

Applying \thmref{thm4} and \lemref{LAlm}, we prove the following theorem.
\begin{thm}\label{thm3}
Let $f$ be a non-CM normalized Hecke eigenform for $\SL_2(\Z)$ 
with integer Fourier coefficients $\{a_f(r) : r \in \N \}$. 
Also let $m \ge 1$ and $u, v$ be integers such that 
$0 \leq u, v < \ell^m$ and $(u, \ell)=1$. Then the density of the set
$$
\Pi_f(u,v; \ell^m) ~=~ 
\bigg\{p  ~:~ ~p \equiv u ~\(\text{ mod } \ell^m\) 
\text{ and } 
a_f(p) \equiv  v ~\(\text{ mod } \ell^m\) \bigg\}
$$
exists, denoted by $\delta_{u,v}(\ell^m)$, and we have
$$
\delta_{u,v}(\ell) 
~=~ \frac{1}{\ell^2} ~+~ O\(\frac{1}{\ell^3}\)
\phantom{mm}\text{and}\phantom{mm} 
\delta_{u,v}(\ell^m) ~\ll~ \frac{1}{\ell^{2m}}
$$
for $m \in \N$.
Further, if
$$
\pi_f\(x,u,v;\ell^m\)
~=~ \#\{p \leq x ~:~ p \in \Pi_f(u,v; \ell^m) \},
$$
then there exists a positive constant $c$ such that
\begin{equation*}
\pi_f\(x,u,v;\ell^m\)
~=~ \delta_{u,v}(\ell^m) \pi(x) 
~+~ O\(x \exp\(- (\log x)^{\frac13}\)\)
\end{equation*}
uniformly for $\ell^m \leq (\log x)^c$.	
Moreover, if we assume that GRH is true, then  we have
$$
\pi_f(x,u, v; \ell^m) 
~=~ \delta_{u, v}(\ell^m) \(\pi(x) 
~+~ O\(\ell^{4m} x^{\frac{1}{2}} \log(\ell^m x)\)\)
$$
for $x \geq 2 $.
\end{thm}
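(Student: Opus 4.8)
The plan is to apply the effective Chebotarev density theorem to the Galois extension $K_{\ell^m}/\Q$, where $K_{\ell^m}$ is the fixed field of $\ker\tilde\rho_{\ell^m,f}$, and to identify the set $\Pi_f(u,v;\ell^m)$ with a union of Frobenius conjugacy classes. First I would observe that for $p\nmid\ell$, the condition $p\equiv u\pmod{\ell^m}$ and $a_f(p)\equiv v\pmod{\ell^m}$ is precisely the condition that $\tilde\rho_{\ell^m,f}(\sigma_p)$ lies in the conjugation-stable subset
$$
\mathcal{C}_{u,v} ~=~ \{ g \in \GL_2(\Z/\ell^m\Z) ~:~ \det g = u,~ \mathrm{tr}\, g = v \}
$$
of $G_{\ell^m} := \mathrm{Gal}(K_{\ell^m}/\Q) \hookrightarrow \GL_2(\Z/\ell^m\Z)$, using the compatibility $\mathrm{tr}\,\rho_{\ell^m,f}(\sigma_p) = a_f(p)$, $\det\rho_{\ell^m,f}(\sigma_p) = p^{k-1}$ together with the fact that $p\mapsto p^{k-1}$ in $(\Z/\ell^m\Z)^\times$ is determined by $p \bmod \ell^m$. (One must be slightly careful: $\det g = p^{k-1}$, not $p$ itself, but since $u$ is a fixed residue coprime to $\ell$, the two conditions on $\det$ differ only by composing with the $(k-1)$-power map on $(\Z/\ell^m\Z)^\times$; I would restate $\mathcal{C}_{u,v}$ with $\det g = u^{k-1}$ and carry this through.) Then Chebotarev gives that the density $\delta_{u,v}(\ell^m)$ exists and equals $|\mathcal{C}_{u,v}\cap G_{\ell^m}|/|G_{\ell^m}|$.

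The next step is to compute this ratio. Since $f$ is non-CM, by a theorem of Ribet / Serre the image $G_{\ell^m}$ is \emph{large}: for all but finitely many $\ell$ it contains (indeed equals, up to the determinant constraint) the full group of matrices in $\GL_2(\Z/\ell^m\Z)$ whose determinant lies in the image of $p\mapsto p^{k-1}$, and for the remaining finitely many $\ell$ the index of $G_{\ell^m}$ in this group is bounded independently of $m$. Given this, one counts: the number of $g\in\GL_2(\Z/\ell^m\Z)$ with prescribed trace $v$ and prescribed determinant $w:=u^{k-1}$ is $\ell^m + O(\ell^{m-1})$ when $w$ is a unit (this is a standard character-sum or direct count over the $\ell$-adic tower, reducing to counting points on $x(t-x)\equiv w$, i.e. roughly $\ell^m$ choices), while $|G_{\ell^m}|$ is $\ell^{4m}(1+O(1/\ell))$ times a bounded constant; dividing and also dividing by $\varphi(\ell^m)$ to account for the fixed value of $p\bmod\ell^m$ inside $(\Z/\ell^m\Z)^\times$ gives $\delta_{u,v}(\ell^m) = \ell^{-2m}(1+O(\ell^{-1}))$, and for $m=1$ the more precise $\ell^{-2} + O(\ell^{-3})$. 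The factor $\ell^{4m}$ appearing in the GRH error term is exactly $[K_{\ell^m}:\Q]$ up to a bounded factor, and $\log|\mathrm{disc}(K_{\ell^m})| \ll \ell^m$ since $K_{\ell^m}/\Q$ is ramified only at $\ell$ (and primes dividing the level, here none) with tame-type bounds; this feeds the conditional bound
$$
\pi_f(x,u,v;\ell^m) ~=~ \delta_{u,v}(\ell^m)\Bigl(\pi(x) + O\bigl(\ell^{4m} x^{1/2}\log(\ell^m x)\bigr)\Bigr)
$$
via the GRH-conditional effective Chebotarev bound of Lagarias–Odlyzko (in the Murty–Murty–Saradha normalization that isolates the density factor). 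For the unconditional statement I would instead invoke the unconditional effective Chebotarev theorem with a Deuring–Heilbronn–type exceptional-zero treatment, which yields an error $O(x\exp(-c'(\log x)^{1/2}))$ for a single fixed field; to get the stated $\exp(-(\log x)^{1/3})$ uniformly for $\ell^m\le(\log x)^c$ one tracks the dependence on the degree $n_{K}\ll\ell^{4m}\ll(\log x)^{4c}$ and the discriminant $\log d_K\ll\ell^m\ll(\log x)^c$ through the standard estimates, choosing $c$ small enough that these are absorbed into the exponent.

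The main obstacle, and the place requiring the most care, is \textbf{uniformity in $\ell^m$} — both in the field-image input and in the Chebotarev error term. For the image: I must ensure that the index $[\{g : \det g \in \mathrm{im}(p\mapsto p^{k-1})\} : G_{\ell^m}]$ is bounded as $m\to\infty$ for each of the finitely many bad $\ell$; this follows from the fact that the $\ell$-adic image $\rho_{\ell^\infty,f}(\G)$ is open in $\GL_2(\Z_\ell)$ (Serre, for non-CM $f$), so its index is finite and the mod-$\ell^m$ image is eventually the full preimage of the closed image. For the error terms: the unconditional bound needs the explicit dependence on degree and discriminant in effective Chebotarev (e.g. the form in Serre's ``Quelques applications…'' or in work of Murty–Murty), and one has to verify $\log d_{K_{\ell^m}} \ll \ell^m$ — this uses the conductor-discriminant formula together with the fact that $K_{\ell^m}/\Q$ is ramified only at $\ell$ with bounded ``depth of ramification'' coming from the structure of $\GL_2(\Z/\ell^m\Z)$ as an $\ell$-group extension of $\GL_2(\F_\ell)$. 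The rest — identifying the Frobenius condition, counting matrices of given trace and determinant, and assembling the density — is routine once these uniform inputs are in hand, and indeed the structure closely parallels the proof of \thmref{thm2} (which in the $n$-fold-product setting reduces, after the factorization of $\lambda_F(p)$, to understanding when $a_f(p) \equiv -p^{k-i}-p^{k-n-1+i}\pmod{\ell^m}$ for some $i$, i.e. to a finite union of sets of the type handled here).
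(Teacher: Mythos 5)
Your proposal has a genuine gap at the heart of the argument. You propose to work with $K_{\ell^m}/\Q$ alone and identify the condition ``$p\equiv u \pmod{\ell^m}$ and $a_f(p)\equiv v \pmod{\ell^m}$'' with a Frobenius condition in ${\rm Gal}(K_{\ell^m}/\Q)$ by restating the determinant condition as $\det g = u^{k-1}$. But this is not equivalent: $\det\tilde\rho_{\ell^m,f}(\sigma_p)=p^{k-1}$, and $p^{k-1}\equiv u^{k-1} \pmod{\ell^m}$ does \emph{not} imply $p\equiv u \pmod{\ell^m}$. The $(k-1)$-power map on $(\Z/\ell^m\Z)^\times$ has a kernel of size $r_{\ell^m}=(k-1,\varphi(\ell^m))$, so the set you would capture is a union of $r_{\ell^m}$ residue classes $u' \pmod{\ell^m}$, not the single class $u$. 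In fact $\Q(\zeta_{\ell^m})\not\subseteq K_{\ell^m}$ in general — the intersection $K_{\ell^m}\cap\Q(\zeta_{\ell^m})$ is the subfield $A_{\ell^m}$ of $\Q(\zeta_{\ell^m})$ of index $r_{\ell^m}$ (this is precisely the content of \thmref{thm4}) — so the residue of $p$ mod $\ell^m$ is simply not a function of $\sigma_p|_{K_{\ell^m}}$. The ad hoc ``also dividing by $\varphi(\ell^m)$'' has no Chebotarev justification and is also numerically off: for $m=1$ and $\ell$ large your recipe gives roughly $r_\ell/\ell^3$, not the stated $1/\ell^2$. (Relatedly, your count of matrices in $\GL_2(\Z/\ell^m\Z)$ with fixed trace and determinant as $\approx\ell^m$ is off by a factor $\ell^m$; the correct order is $\ell^{2m}$, as in \lemref{lemtrdet}.)

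The fix, which is what the paper does, is to apply Chebotarev not to $K_{\ell^m}$ but to the compositum $L_{\ell^m}=K_{\ell^m}\Q(\zeta_{\ell^m})$, viewed as a fiber product of the two Galois groups over their common quotient ${\rm Gal}(\tilde A_{\ell^m}/\Q)$, where $\tilde A_{\ell^m}=K_{\ell^m}\cap\Q(\zeta_{\ell^m})$. Then the Frobenius condition is $(\sigma_p|_{K_{\ell^m}},\sigma_p|_{\Q(\zeta_{\ell^m})})\in \tilde C_{\ell^m,v}\times\{\psi_u\}$ with the compatibility on $\tilde A_{\ell^m}$, and one needs \thmref{thm4} to identify $\tilde A_{\ell^m}$ with $A_{\ell^m}$ (equality for $\ell$ large, containment always) so that the compatibility constraint singles out exactly one determinant value $d_u\in((\Z/\ell^m\Z)^\times)^{k-1}$; then \lemref{lemtrdet} gives $|\tilde C_{\ell^m,v,u}|\ll\ell^{2m}$ and $[L_{\ell^m}:\Q]\asymp\ell^{4m}$, yielding $\delta_{u,v}(\ell^m)\ll\ell^{-2m}$. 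Your sketch of the effective Chebotarev step (Lagarias--Odlyzko, Stark's zero-free region, conductor--discriminant bound, GRH variant) is sound and matches the paper, but it cannot be executed until the correct field $L_{\ell^m}$ and conjugacy set are identified; the intersection computation \thmref{thm4} is exactly the missing ingredient.
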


\section{Prerequisites}
	
\subsection{Prerequisites from Siegel modular forms}\label{SIl}
As before, let $S_k(\Gamma_n)$ be the space of Siegel cusp forms 
of weight $k$ and degree $n$ for $\Gamma_n$. 
See \cite{AAb, BGHZ, EZ, HM} for an introduction to Siegel modular forms.

\subsubsection{Saito-Kurokawa lifts}
Let $f \in S_{2k-2}(\Gamma_1)$ be a normalized elliptic Hecke eigenform
with Fourier coefficients $\{ a_f(m) \}_{m \ge 1}$. 
It was conjectured by Saito and Kurokawa \cite{NK} that 
there exists a Hecke eigenform $F \in S_k(\Gamma_2)$ such that
$$
Z_F(s) ~=~ \zeta(s-k+1) \zeta(s-k+2) L(s, f).
$$
Here 
$$
Z_F(s) ~=~ \zeta(2s-2k+4) \sum_{m=1}^{\infty} \frac{\lambda_F(m)}{m^s}
$$
is the spinor zeta function associated with $F$
and 
$$
L(s, f) = \sum_{m=1}^{\infty} \frac{a_f(m)}{m^s}
$$ 
is the modular $L$-function associated with $f$.
This conjecture was resolved by Maass, Andrianov and Zagier 
(see \cite{AAc, HM1, HM2, HM3, DZ} for further details). 

We have the following expression (see \cite{SB}, \cite[p. 4]{GPS}) relating 
Hecke eigenvalues of $F$ and Fourier coefficients of $f$ at primes $p$ :
$$
\lambda_F(p) ~=~ a_f(p) + p^{k-1} + p^{k-2} .	
$$
	
\subsubsection{Ikeda lifts} 
As before, let us assume that $k> n+1$. A generalization of the Saito-Kurokawa 
lifts to higher degrees was predicted by Duke and Imamo\={g}lu.  
They conjectured that for a normalized elliptic Hecke eigenform 
$f \in S_{2k-n}(\Gamma_1)$,  
there exists a Hecke eigenform $F \in S_k(\Gamma_n)$ such that 
$$
L(s, F ; st) ~=~ \zeta(s) \prod_{i=1}^{n} L(s+k-i, f).
$$
Here $L(s, F ; st)$ denotes the  standard $L$-function associated to $F$ 
(see \cite[p. 221]{BGHZ}). The existence of such a lift was 
proved by Ikeda \cite{TI} 
and this lift $F$ is now called an Ikeda lift of $f$. 
Note that when $n=2$, it coincides with Saito-Kurokawa lift. 
For any prime $p$, the explicit relation between 
the $p$-th Hecke eigenvalues of $f$ and its Ikeda lift $F$ 
(see \cite[Lemma 3]{GN}, \cite{Mu}) is given by
\begin{equation}\label{eqlambdPol}
\lambda_F(p)  ~=~ \prod_{i=1}^{\frac{n}{2}}  
\( a_f(p) + p^{k-i} + p^{k-n-1+i}\).
\end{equation}

\medspace

\subsection{Applications of $\ell$-adic Galois representations 
and Chebotarev density theorem}\label{SladicG}
Let $f$ be a normalized Hecke eigen cusp form of weight $k$ 
for $\text{SL}_2(\Z)$ having rational integer 
Fourier coefficients $\{a_f(m): m \in \N \}$. Also let 
$\rho_{d,f}$ denote the continuous representation as before 
(see Section \ref{S1}) and $\tilde{\rho}_{d,f}$ 
denote the reduction of $\rho_{d,f}$ modulo $d$:
\begin{equation*}
\tilde{\rho}_{d,f} ~:~ 
{\G} \xrightarrow[]{\rho_{d,f}} {\GL}_2\(\prod_{\ell | d} \Z_\ell\)
\twoheadrightarrow {\GL}_2(\Z/ d\Z).
\end{equation*}
Suppose that $H_d$ is the kernel of $\tilde{\rho}_{d,f}$, 
$K_d$ is the subfield of 
$\overline{\Q}$ fixed by $H_d$ and 
$$
{\G}_d = \text{Gal}(K_d/\Q) \cong \text{Im}\(\tilde{\rho}_{d,f}\).
$$ 
Further
suppose that $C_d$ is the subset of $\tilde{\rho}_{d,f}(\G)$ 
consisting of elements of trace zero.
For any prime $p\nmid d$, the condition $a_f(p) \equiv 0 ~(\mod d)$ 
is equivalent to
the fact that $\tilde{\rho}_{d,f}(\sigma_p) \in C_d$, 
where $\sigma_{p}$ is a Frobenius 
element of $p$ in $\G$. Hence 
by the Chebotarev density theorem applied to $K_d / \Q$, we have
\begin{equation*}
\#\{p \leq x ~:~ p\nmid d ~\text{ and }~ a_f(p) \equiv 0 ~(\mod d) \}
~\sim~ 
\frac{|C_d|}{|{\G}_d|}\pi(x) 
\phantom{m}
\text{as } x \to \infty.
\end{equation*}

\medspace

\subsection{The image of Galois representation $\rho_{\ell, f}$}\label{SSimagerhol}
From the works of Carayol \cite{Ca}, 
Momose \cite{Mo}, Ribet \cite{Ri75,Ri85}, 
Serre \cite{Se76} and Swinnerton-Dyer \cite{Sw}, 
it follows that the image of $\rho_{\ell, f}$ 
is open in ${\rm GL}_2(\Z_{\ell})$ and 
\begin{equation*}
\rho_{\ell, f}(\G)
~\subseteq~
\bigg\{ A \in \GL_2(\Z_{\ell}) 
~:~ \det (A) \in (\Z^{\times}_{\ell})^{k-1}\bigg\}
\end{equation*}
and equality holds if $\ell$ is sufficiently large.
Hence for any $m \in \N$, we have
\begin{equation}\label{eqimgrholm}
\tilde{\rho}_{\ell^m, f}({\rm G})
~\subseteq~
\bigg\{A \in {\rm GL}_2\(\Z/\ell^m \Z\) ~:~ 
\det(A) \in \(\(\Z/\ell^m \Z\)^\times\)^{k-1}\bigg\}
\end{equation}
and equality holds if $\ell$ is sufficiently large.
Let $r_{\ell} = \text{gcd}(\ell-1, k-1)$, then we have
\begin{equation}\label{eq|Imgrholm|}
|\tilde{\rho}_{\ell, f}(\G)| 
~=~
\frac{(\ell^2-1)(\ell^2-\ell)}{r_\ell} 
\phantom{mm}\text{and}\phantom{mm} 	
|\tilde{\rho}_{\ell^m, f}(\G)| 
~=~
\ell^{4(m-1)} |\tilde{\rho}_{\ell, f}(\G)|
\end{equation}
for all sufficiently large $\ell$ (see \cite[Section 3]{GM}).
We also have  
\begin{equation}\label{eq|rhol|low}
|\tilde{\rho}_{\ell, f}(\G)| 
~\gg~
\ell^4,
\end{equation}
where the implied constant depends only on $f$ 
(see \cite[Lemma 5.4]{MM84}, \cite[Prop. 17]{Se81}).

\medspace

\section{Intersection of the fields $K_{\ell^m}$ and $\Q\(\z_{\ell^m}\)$}\label{SintKLQ}
Let $\ell$ be a prime number and $m$ be a positive integer. 
Also let $\zeta_{\ell^m} = e^{\frac{2\pi i}{\ell^m}}$.
The Galois group of $\Q\(\z_{\ell^m}\)$ over $\Q$ is given by
$$
\text{Gal}\(\Q\(\z_{\ell^m}\) / \Q\) 
~=~ \{\psi_a : 1 \leq a < \ell^m,~ (a, \ell)=1\},
$$
where $\psi_a$ is the automorphism of $\Q\(\z_{\ell^m}\)$ over $\Q$ 
such that $\psi_a \( \z_{\ell^m} \) = \zeta_{\ell^m}^{a}$. 
Thus we have
$
\text{Gal}\(\Q\(\z_{\ell^m}\) / \Q\) 
\cong \(\Z/\ell^m \Z\)^{\times}.
$ 
Set $r_{\ell^m} =\(k-1, \varphi(\ell^m)\)$ 
and let $A_{\ell^m}$ be the subfield of $\Q\(\z_{\ell^m}\)$ 
of degree $\frac{\varphi\(\ell^m\)}{r_{\ell^m}}$.

\subsection{Proof of \thmref{thm4}}\label{SSproofthm4}
From subsection \ref{SladicG}, we have an isomorphism 
\begin{equation}\label{isoGalK}
\G_{\ell^m} 
~=~ \text{Gal}\(K_{\ell^m}/\Q\) 
~\cong~ \tilde{\rho}_{\ell^m,f}\(\G\).
\end{equation}
Let the above isomorphism be denoted by $\varrho_{\ell^m, f}$. Also set
$$
\S_{\ell^m} ~=~ 
\varrho_{\ell^m, f}^{-1} 
\bigg(\tilde{\rho}_{\ell^m,f}\(\G\) 
\cap \SL_2\(\Z/\ell^m \Z\) \bigg)
$$ 
and $B_{\ell^m} = K_{\ell^m}^{\S_{\ell^m}}$ be 
the subfield of $K_{\ell^m}$ fixed by $\S_{\ell^m}$. 
Then we have an isomorphism 
\begin{equation*}
\text{Gal}\(B_{\ell^m} /\Q\) 
~\cong~ \frac{\G_{\ell^m}}{\S_{\ell^m}} 
~\cong~ \frac{\tilde{\rho}_{\ell^m,f}\(\G\)}{\tilde{\rho}_{\ell^m,f}\(\G\) 
\cap \SL_2\(\Z/\ell^m \Z\)}.
\end{equation*}
We know that the determinant map
\begin{equation*}	
\det ~:~ \tilde{\rho}_{\ell^m,f}\(\G\) 
\rightarrow \(\(\Z/\ell^m \Z\)^\times\)^{k-1}
\end{equation*}
is surjective (see \cite[Eq. 9]{Sw}) and kernel of the above map 
is equal to $\tilde{\rho}_{\ell^m,f}\(\G\) \cap \SL_2\(\Z/\ell^m \Z\)$. 
Hence we have
\begin{equation*}
\frac{\tilde{\rho}_{\ell^m,f}\(\G\)}{\tilde{\rho}_{\ell^m,f}\(\G\) 
\cap \SL_2\(\Z/\ell^m \Z\)} 
~\cong~ \(\(\Z/\ell^m \Z\)^\times\)^{k-1}.
\end{equation*}
This gives an isomorphism
\begin{equation}\label{isoGalB}
\text{Gal}\(B_{\ell^m} /\Q\) 
~\cong~ \frac{\G_{\ell^m}}{\S_{\ell^m}} 
~\cong~ \(\(\Z/\ell^m \Z\)^\times\)^{k-1}.
\end{equation}
Hence $B_{\ell^m}$ is an abelian subextension of $K_{\ell^m}$ 
of degree $\frac{\varphi\(\ell^m\)}{r_{\ell^m}}$. 
Since $\ell$ is the unique prime which is ramified in $B_{\ell^m}$, 
we deduce that
\begin{equation}\label{clasB}
B_{\ell^m} ~\subseteq~ \Q\(\z_{\ell^M}\)
\end{equation}
for some $M \in \N$. Now from \eqref{isoGalB}, \eqref{clasB} and 
the uniqueness of $A_{\ell^m}$, 
we conclude that $B_{\ell^m} = A_{\ell^m}$. 
Hence we have $A_{\ell^m} \subseteq  K_{\ell^m} \cap \Q\(\z_{\ell^m}\)$. 
Since $A_{2^m} = \Q\(\z_{2^m}\)$, 
we get $\Q\(\z_{2^m}\) \subseteq K_{2^m}$.
		
\smallskip
		
Now let us assume that $\ell$ is sufficiently large and 
$D_{\ell^m}$ be an abelian subextension of $K_{\ell^m}$ 
containing $A_{\ell^m}$.
Note that $\SL_2\(\Z/\ell^m \Z\) \subseteq \tilde{\rho}_{\ell^m,f}\(\G\)$ 
(see subsection \ref{SSimagerhol}) and the commutator subgroup 
\begin{equation}\label{eq-com}
[\SL_2\(\Z/\ell^m \Z\), \SL_2\(\Z/\ell^m \Z\)] ~=~ \SL_2\(\Z/\ell^m \Z\).
\end{equation}
The quotient
$$
\frac{\G_{\ell^m}}{\text{Gal}\(K_{\ell^m} / D_{\ell^m}\)} 
~\cong~ \text{Gal}\(D_{\ell^m} / \Q\)
$$
is abelian and hence  we get
$$
[\G_{\ell^m}, \G_{\ell^m}] ~\subseteq~ \text{Gal}\(K_{\ell^m} / D_{\ell^m}\).
$$ 
It follows from \eqref{eq-com} that $[\S_{\ell^m}, \S_{\ell^m}] = \S_{\ell^m}$
and hence we get 
$$
\S_{\ell^m} ~=~ [\S_{\ell^m}, \S_{\ell^m}]  
~\subseteq~ [\G_{\ell^m}, \G_{\ell^m}] 
~\subseteq~  \text{Gal}\(K_{\ell^m} / D_{\ell^m}\).
$$
By the Galois correspondence, we get 
$D_{\ell^m} \subseteq A_{\ell^m}$ and 
thus $D_{\ell^m} = A_{\ell^m}$. 
Hence we conclude that $A_{\ell^m} = K_{\ell^m} \cap \Q\(\z_{\ell^m}\) $ 
and $A_{\ell^m}$ is the maximal abelian subfield of $K_{\ell^m}$, 
provided $\ell$ is sufficiently large. \qed

\subsection{Proof of \lemref{LAlm}}
First we note that \lemref{LAlm} is clearly true when $\ell=2$, 
since $A_{2^m} = \Q\(\z_{2^m}\)$.
Hence we suppose that $\ell$ is an odd prime.
By uniqueness of subfields of $\Q\(\z_{\ell^{m+1}}\)$ 
of fixed degree, we deduce that $A_{\ell^m} \subseteq A_{\ell^{m+1}}$. 
Then \lemref{LAlm} follows by noting that
$$
[A_{\ell^{m+1}} : A_{\ell^m}] 
~=~ \ell \cdot \frac{r_{\ell^m}}{r_{\ell^{m+1}}} 
~=~ \ell \cdot  \frac{\(\ell^{m-1}, k-1 \) }{\(\ell^{m}, k-1 \) }
~=~
\begin{cases}
& \ell ~~\phantom{m}\text{ if } ~~m-1 \geq \nu_{\ell}(k-1), \\
& 1  ~~\phantom{m} \text{otherwise}.
\end{cases}
$$
\qed

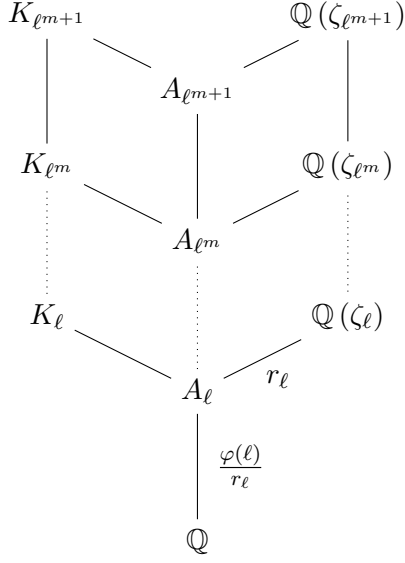
\begin{figure}[H]
\centering
\begin{tikzpicture}
\node (Q1) at (0,0) {$\Q$};
\node (Q2) at (0,2) {$A_\ell$};
\node (Q3) at (0,4) {$A_{\ell^m}$};
\node (Q4) at (0,6) {$A_{\ell^{m+1}}$};
		
\node (Q5) at (2,3) {$\Q\(\z_{\ell}\)$};
\node (Q6) at (2,5) {$\Q\(\z_{\ell^m}\)$};
\node (Q7) at (2,7) {$\Q\(\z_{\ell^{m+1}}\)$};
		
\node (Q8) at (-2,3) {$K_\ell$};
\node (Q9) at (-2,5) {$K_{\ell^m}$};
\node (Q10) at (-2,7) {$K_{\ell^{m+1}}$};
		
\draw (Q1)--(Q2) node [pos=0.5, right,inner sep=0.25cm] {$\frac{\varphi(\ell)}{r_{\ell}}$};
\draw[dotted] (Q2)--(Q3); 
\draw (Q3)--(Q4);
		
\draw (Q2)--(Q5) node [pos=0.7, below,inner sep=0.25cm]{$r_{\ell}$};
\draw (Q3)--(Q6);
\draw (Q4)--(Q7);
		
\draw[dotted] (Q5)--(Q6);
\draw (Q6)--(Q7);
		
\draw (Q2)--(Q8);
\draw (Q3)--(Q9);
\draw (Q4)--(Q10);
		
\draw[dotted] (Q8)--(Q9);
\draw (Q9)--(Q10);
\end{tikzpicture}
\caption{Intersection of $K_{\ell^m}$ and $\Q\(\z_{\ell^m}\)$} \label{fig1}
\end{figure}

\medskip
		
\section{Distribution of Fourier coefficients in Arithmetic progressions}\label{SDistafp}

We will prove the following lemma which is used in the proofs of main theorems.
\begin{lem}\label{lemtrdet}
Let $\ell$ be a prime number and $m$ be a positive integer.	
For any $t \in \Z/\ell^m \Z$ and $d \in \(\Z/\ell^m \Z\)^\times$, let 
$$
E_{\ell^m, t, d} 
~=~ 
\bigg\{ A \in \GL_2\(\Z/\ell^m \Z\) ~:~  \text{tr}(A) = t, ~ \det(A) = d  \bigg\}.
$$
Then we have
\begin{equation*}
\# E_{\ell, t, d} ~=~ \ell^2 +O(\ell) 
\phantom{mm}\text{and}\phantom{mm}	
\# E_{\ell^m, t, d} 
~\ll~  \ell^{2m},
\end{equation*}
where the implied constant is absolute.
\end{lem}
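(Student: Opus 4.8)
The plan is to count $2\times 2$ matrices over $\Z/\ell^m\Z$ with prescribed trace and determinant by a direct parametrization, handling the prime case $m=1$ exactly and then lifting. First I would do $m=1$: a matrix $A=\left(\begin{smallmatrix} a & b \\ c & e\end{smallmatrix}\right)$ over $\F_\ell$ with $\mathrm{tr}(A)=t$, $\det(A)=d$ satisfies $e=t-a$ and $a(t-a)-bc=d$, i.e. $bc = a(t-a)-d =: g(a)$. For each of the $\ell$ choices of $a\in\F_\ell$ we count pairs $(b,c)$ with $bc=g(a)$: if $g(a)\ne 0$ there are exactly $\ell-1$ such pairs, and if $g(a)=0$ there are $2\ell-1$. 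Since $g$ is a quadratic polynomial in $a$ it has at most $2$ roots, so the total is $(\ell-2)(\ell-1) + (\text{at most }2)(2\ell-1) \le \ell^2 + O(\ell)$ and likewise $\ge \ell^2 - O(\ell)$; hence $\#E_{\ell,t,d} = \ell^2 + O(\ell)$ with absolute implied constant. One should also note $d\in\F_\ell^\times$ forces $g(a)=a(t-a)-d$ to not vanish identically, so the count is genuinely $\ell^2+O(\ell)$ and nonzero.

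For general $m$ I would give the cheap upper bound that suffices for the statement. The determinant-and-trace conditions are two congruences; parametrize $A$ by its four entries $a,b,c,e \in \Z/\ell^m\Z$. Impose $e \equiv t-a \pmod{\ell^m}$, which pins down $e$, leaving $a,b,c$ free subject to $bc \equiv a(t-a)-d \pmod{\ell^m}$. For each of the $\ell^m$ choices of $a$, the number of pairs $(b,c)\in(\Z/\ell^m\Z)^2$ with $bc$ equal to a fixed element is at most $C\,\ell^m$ for an absolute constant $C$ (indeed, writing the target as $\ell^j w$ with $w$ a unit and $0\le j\le m$, one counts: $b=\ell^i u$ for $0\le i\le j$ and $u$ a unit, forcing $c$ into a coset, giving $\sum_{i=0}^{j}\varphi(\ell^{m-i})\ell^i \le (m+1)\ell^m \ll \ell^m$; one can even be crude and bound the number of divisor-type solutions by $d(\ell^m)\ell^m$-type estimates, but the sharper linear bound in $\ell^m$ is what we want, absorbing the harmless factor into the implied constant — actually the factor $(m+1)$ is not absolute, so to get an absolute constant one argues that the number of $(b,c)$ with $bc = \text{fixed}$ in $(\Z/\ell^m\Z)^2$ is $O(\ell^m)$ by instead fixing $b$'s $\ell$-adic valuation to the minimum compatible value, or simply by the identity $\#\{(b,c): bc = r\} = \sum_{\ell^i \| \gcd} \varphi(\ell^{m-i}) \le \ell^m \sum_i \ell^{-i} \le \frac{\ell}{\ell-1}\ell^m \le 2\ell^m$). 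Multiplying, $\#E_{\ell^m,t,d} \le \ell^m \cdot 2\ell^m = 2\ell^{2m} \ll \ell^{2m}$, absolutely.

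The main obstacle is purely bookkeeping: making the implied constant in the $m$-aspect genuinely absolute rather than depending on $m$. The naive divisor-sum bound for $\#\{(b,c) : bc = r \text{ in } (\Z/\ell^m\Z)^2\}$ produces a factor like $m+1$ or $d(\ell^m)$; the fix, as indicated above, is the exact count $\sum_{i=0}^{\nu_\ell(r)} \varphi(\ell^{m-i})\,\ell^{i}$ (with the convention that the top term when $\nu_\ell(r)\ge m$, i.e. $r=0$, contributes $\ell^m$), and then bounding this geometric-type sum by $\ell^m \cdot \frac{1}{1-1/\ell} \le 2\ell^m$ uniformly in $m$ and in $\ell \ge 2$. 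Everything else — the $m=1$ exact count and the multiplication by the $\ell^m$ choices of $a$ — is routine. One subtlety worth a sentence: for the lower bound in the $m=1$ case one uses that $d$ is a unit, so the quadratic $g(a)=a(t-a)-d$ is not identically zero and has at most two roots, guaranteeing at least $(\ell-2)$ values of $a$ contributing $\ell-1$ each, so $\#E_{\ell,t,d}\ge (\ell-2)(\ell-1) = \ell^2 + O(\ell)$, which also shows $E_{\ell,t,d}\ne\varnothing$.
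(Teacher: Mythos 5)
Your $m=1$ argument is correct and is the same parametrization the paper uses (fix $a$, set $e=t-a$, count $(b,c)$ with $bc=a(t-a)-d$, and split according to whether $g(a)=0$). The problem is in the general-$m$ upper bound: the step you flag as a bookkeeping subtlety is in fact a genuine gap, and your proposed fix is based on a miscount.

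The claimed uniform estimate
$$\#\bigl\{(b,c)\in(\Z/\ell^m\Z)^2 : bc = r\bigr\} \ll \ell^m$$
is false. Writing $r=\ell^j w$ with $w$ a unit and $0\le j< m$, for each of the $\varphi(\ell^{m-i})$ choices of $b$ with $\nu_\ell(b)=i\le j$ there are exactly $\ell^{i}$ choices of $c$ solving $bc=r$ in $\Z/\ell^m\Z$, so the exact count is
$$\sum_{i=0}^{j}\varphi(\ell^{m-i})\,\ell^{i} \;=\; (j+1)\,\varphi(\ell^{m}),$$
because each term equals $(\ell-1)\ell^{m-1}$; this sum is \emph{constant per term}, not geometric. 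Your displayed manipulation $\sum_{i}\varphi(\ell^{m-i})\le \ell^m\sum_i\ell^{-i}$ has dropped the $\ell^i$ factor, which is exactly what ruins the geometric decay. For $r=0$ the count is $m\varphi(\ell^m)+\ell^m$ (e.g.\ for $\ell=2$, $m=3$ it is $20>2\cdot 8$). So the inner count carries an unbounded factor $\nu_\ell(r)+1$, and multiplying it by the $\ell^m$ choices of $a$ gives only $O(m\,\ell^{2m})$, not $O(\ell^{2m})$ with an absolute constant. Likewise, the alternative fix of ``fixing $b$'s valuation at the minimum compatible value'' does not give all solutions, since $b$ can have any valuation in $\{0,\dots,j\}$.

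The missing idea, which is the crux of the paper's proof, is that the values of $a$ for which $g(a)=a(t-a)-d$ is highly divisible by $\ell$ are rare. Concretely, setting $Z_{\ell^j}=\{\,a\in\{0,\dots,\ell^m-1\}:\nu_\ell(g(a))=j\,\}$ (and $Z_{\ell^m}$ for $\ell^m\mid g(a)$), one needs the quadratic-congruence bound $|Z_{\ell^j}|\ll \ell^{\,m-j/2}$, which follows from the fact that a quadratic polynomial has $O(\ell^{j/2})$ roots modulo $\ell^j$. Then
$$\#E_{\ell^m,t,d}\;=\;|Z_{\ell^m}|\bigl(m\varphi(\ell^m)+\ell^m\bigr)+\sum_{j=0}^{m-1}(j+1)\,|Z_{\ell^j}|\,\varphi(\ell^m)\;\ll\;\ell^{2m}\sum_{j\ge 0}(j+1)\ell^{-j/2}\;\ll\;\ell^{2m},$$
and the $j$-sum converges, absorbing the dangerous $(j+1)$ factor. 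Without the $|Z_{\ell^j}|\ll\ell^{m-j/2}$ input, the absolute constant in the statement cannot be obtained.
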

\begin{proof}
If
$$
\begin{pmatrix}
x & y \\
z & w 
\end{pmatrix}
~\in~ E_{\ell^m, t, d},
$$
then we have $x+w = t$ and $xw-yz=d$. This gives
$$
y z ~=~ - (x^2- x t+d).
$$
First consider the case $m=1$.
Set
$$
Z_{1} ~=~ \{ a \in \F_{\ell}  ~:~ a^2-at+d \neq 0 \}
\phantom{mm}\text{and}\phantom{mm}	
Z_{\ell} ~=~ \{ a \in \F_{\ell}  ~:~ a^2-at+d = 0 \}.
$$
Then it can be shown that
$$
\#\Bigg\{\begin{pmatrix}
x & y \\
z & w 
\end{pmatrix}
~\in~ E_{\ell, t, d} ~:~x \in Z_1 \Bigg\}
~=~
|Z_1| \cdot \varphi(\ell)
$$
and 
$$
\#\Bigg\{\begin{pmatrix}
x & y \\
z & w 
\end{pmatrix}
~\in~ E_{\ell, t, d} ~:~ x \in Z_{\ell} \Bigg\}
~=~
|Z_{\ell}| \cdot  \(\varphi(\ell) + \ell\).
$$
Also note that $|Z_{\ell}| \leq 2$.
Thus we get
\begin{equation*}
\begin{split}
\#E_{\ell, t, d} ~=~
|Z_1| \cdot \varphi(\ell) 
~+~ |Z_{\ell}| \cdot  \(\varphi(\ell) + \ell\) 
~=~ \ell \varphi(\ell) +  \ell |Z_\ell|
~=~ \ell^2 + O\(\ell\) .
\end{split}
\end{equation*}
Now assume that $m\geq 2$. We proceed in a similar manner.
For $0 \leq j < m$, set
$$
Z_{\ell^j} ~=~ \bigg\{ a \in \{0, 1, \cdots, \ell^m -1\}  
~:~   \ell^j \mid (a^2- at + d),  ~\ell^{j+1} \nmid (a^2 - at + d) \bigg\}
$$
and
$$
Z_{\ell^m} ~=~ \bigg\{ a \in \{0, 1, \cdots, \ell^m -1\}  
~:~  \ell^m \mid (a^2  - at + d) \bigg\}.
$$
Then it can be shown that for $0 \leq j < m$,
\begin{equation}\label{eqElmZ}
\#\Bigg\{\begin{pmatrix}
x & y \\
z & w 
\end{pmatrix}
~\in~ E_{\ell^m, t, d} ~:~ 
[x]=[a] \text{ for some } a \in Z_{\ell^j} \Bigg\}
~=~ |Z_{\ell^j}| \cdot (j+1)\varphi(\ell^m)
\end{equation}
and
\begin{equation}\label{eqElmZ2}
\#\Bigg\{\begin{pmatrix}
x & y \\
z & w 
\end{pmatrix}
~\in~ E_{\ell^m, t, d} 
~:~ [x]=[a] \text{ for some } a \in Z_{\ell^m} \Bigg\}
~=~ |Z_{\ell^m}| \cdot \(m \varphi(\ell^m)+ \ell^m\).
\end{equation}
Further,
\begin{equation}\label{eqZlj}
|Z_{\ell^j}| ~\leq ~ 16 ~\ell^{m-\frac{j}{2}}
\end{equation}
for any $0 \leq j \leq m$. Hence we get
\begin{equation*}
\begin{split}
\#E_{\ell^m, t, d} 
&~=~ |Z_{\ell^m}| (m \varphi(\ell^m)+ \ell^m) +
\sum_{j=0}^{m-1} (j+1) |Z_{\ell^j}| \varphi(\ell^m)  \\
& ~\ll~ \ell^{m/2} (\ell^m + m \varphi(\ell^m)) 
+  \varphi(\ell^m) \sum_{j=0}^{m-1} (j+1) \ell^{m-j/2} 
~\ll~ \ell^{2m}.
\end{split}
\end{equation*}
This completes the proof of \lemref{lemtrdet}.
\end{proof}

\medspace

\subsection{Proof of \thmref{thm3}}
Let $m \in \N$ and $u, v$ be integers such that 
$0 \leq u, v < \ell^m$ and $(u, \ell)=1$. 
Also let $\Pi_f(u,v; \ell^m)$ be as before
and $\tilde{\rho}_{\ell^m, f}$ be as in 
subsection \ref{SladicG} and $C_{\ell^m, v}$ be the set of elements 
of trace $v$ in $\tilde{\rho}_{\ell^m, f}(\G)$. 
For $p \nmid \ell$, 
the condition $a_f(p) \equiv  v ~\(\text{ mod } \ell^m\)$ 
is equivalent to the fact that 
$\tilde{\rho}_{\ell^m, f}(\sigma_{p}') \in C_{\ell^m, v}$, 
where $\sigma_{p}'$ is a Frobenius element of $p$ in $\G$. 
Also for $p \nmid \ell$, 
$p \equiv u ~\(\text{ mod } \ell^m\)$ is equivalent to 
$\sigma_{p}'' = \psi_u$, where $\sigma_{p}''$ is the
Frobenius element of $p$ in the Galois group of $\Q(\z_{\ell^m})$
over $\Q$. Here $ \psi_u$ is as defined in Section \ref{SintKLQ}.

Let $L_{\ell^m} = K_{\ell^m} \Q\(\z_{\ell^m}\)$ 
be the compositum of the fields $K_{\ell^m}$ and  
$\Q\(\z_{\ell^m}\)$. Also let 
$\tilde{A}_{\ell^m} =  K_{\ell^m} \cap \Q\(\z_{\ell^m}\)$. 
Note that $L_{\ell^m}$ is a Galois extension of $\Q$ 
of degree equal to
$$
[L_{\ell^m} : \Q] 
~=~ \frac{[K_{\ell^m} : \Q] 
[\Q\(\z_{\ell^m}\) : \Q]}{[\tilde{A}_{\ell^m} : \Q]}.
$$
From \thmref{thm4}, we have $A_{\ell^m} \subseteq \tilde{A}_{\ell^m}$ 
and equality holds if $\ell$ is sufficiently large.
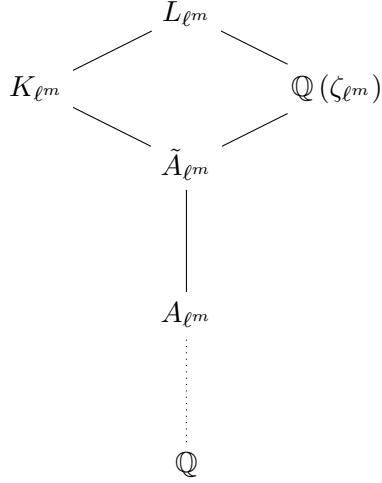
\begin{figure}[H]
\centering
\begin{tikzpicture}
\node (Q1) at (0,0) {$\Q$};
\node (Q2) at (0,2) {$A_{\ell^m}$};
\node (Q3) at (0,4) {$\tilde{A}_{\ell^m}$};
\node (Q4) at (0,6) {$L_{\ell^{m}}$};
\node (Q5) at (-2,5) {$K_{\ell^m}$};
\node (Q6) at (2,5) {$\Q\(\z_{\ell^m}\)$};
		
\draw[dotted] (Q1)--(Q2);
\draw (Q2)--(Q3);
\draw (Q3)--(Q5);
\draw (Q5)--(Q4);
\draw (Q3)--(Q6);
\draw (Q6)--(Q4);
\end{tikzpicture}
\caption{Compositum of $K_{\ell^m}$ 
and $\Q\(\z_{\ell^m}\)$} \label{fig2}
\end{figure}
Further, we have an injective map
\begin{eqnarray*}
i_{\ell^m} ~:~ \text{Gal}\(L_{\ell^m}/ \Q\) 
~&\longrightarrow&~  \text{Gal}\(K_{\ell^m}/ \Q\) 
\times  \text{Gal}\(\Q\(\z_{\ell^m}\)/ \Q\) \\
\sigma ~&\longmapsto&~ 
\(\sigma|_{K_{\ell^m}}, \sigma|_{\Q\(\z_{\ell^m}\)}\)
\end{eqnarray*}
and the image of $i_{\ell^m}$ is given by
$$
\text{Im}\(i_{\ell^m}\) 
~=~ \bigg\{(\sigma_1, \sigma_2) \in \text{Gal}\(K_{\ell^m}/ \Q\)
\times  
\text{Gal}\(\Q\(\z_{\ell^m}\)/ \Q\) 
~:~ \sigma_1|_{\tilde{A}_{\ell^m}} 
= \sigma_2|_{\tilde{A}_{\ell^m}} \bigg\}.
$$
Let $\tilde{C}_{\ell^m, v} 
= \varrho_{\ell^m, f}^{-1}\(C_{\ell^m, v}\)$. 
For $p \nmid \ell$, we have
$$ 
p ~\equiv~ u ~\(\text{ mod } \ell^m\) 
\phantom{m}\text{and}\phantom{m}  
a_f(p) ~\equiv~ v ~\(\text{ mod } \ell^m\) 
\phantom{mm}\iff\phantom{mm}
i_{\ell^m}\(\sigma_{p}\) ~\in~ 
\tilde{C}_{\ell^m, v} \times \{\psi_u\},
$$
where $\sigma_{p}$ is a Frobenius element of $p$ in 
the Galois group of $L_{\ell^m}$ over $\Q$. 
Let 
$$
\tilde{C}_{\ell^m, v, u} 
~=~ \bigg\{ \sigma  \in \tilde{C}_{\ell^m, v} 
~:~ \sigma|_{\tilde{A}_{\ell^m}} 
= \psi_u|_{\tilde{A}_{\ell^m}} \bigg\}
\phantom{mm}\text{and}\phantom{mm}
\delta_{u,v}(\ell^m) 
~=~ \frac{|\tilde{C}_{\ell^m, v, u}|}{[L_{\ell^m} : \Q]}.
$$
By Chebotarev density theorem (see subsection \ref{SladicG}), 
we deduce that
\begin{equation}\label{eqasymppifuv}
\pi_f\(x,u,v;\ell^m\) 
~=~ \#\{p \leq x ~:~ p \in \Pi_f(u,v; \ell^m)\}
~\sim~ \frac{|\tilde{C}_{\ell^m, v, u}|}{[L_{\ell^m} : \Q]} \pi(x) 
\quad \text{ as }~~ x \to \infty.
\end{equation}
Set
$$
\delta_{u,v}(\ell^m)  
~=~ \frac{|\tilde{C}_{\ell^m, v, u}|}{[L_{\ell^m} : \Q]}.
$$
Then \eqref{eqasymppifuv} shows that the density of the set 
$\Pi_f(u,v; \ell^m)$ exists and is equal to $\delta_{u,v}(\ell^m)$.

We partition $C_{\ell^m, v}$ as 
\begin{equation}\label{eqpartCv}
C_{\ell^m, v} 
~=~ \bigsqcup_{d \in \(\(\Z/\ell^m \Z\)^\times\)^{k-1}} 
D_{\ell^m,v,d},
\end{equation}
where $D_{\ell^m,v,d}$ is the set of elements of  
$C_{\ell^m,v}$ of determinant $d$.
Note that if $\ell$ is sufficiently large, 
then $D_{\ell^m,v,d} \neq \emptyset$ for each 
$d \in \(\(\Z/\ell^m \Z\)^\times\)^{k-1}$, 
since
\begin{equation*}
\begin{pmatrix}
& v & -1 \\
& d & 0\\
\end{pmatrix}
~\in~ D_{\ell^m,v,d}.
\end{equation*}
From \eqref{eqpartCv}, we get a partition of $\tilde{C}_{\ell^m, v}$
as 
\begin{equation*}
\tilde{C}_{\ell^m, v} 
~=~ \bigsqcup_{d \in \(\(\Z/\ell^m \Z\)^\times\)^{k-1}} 
\tilde{D}_{\ell^m,v,d},
\end{equation*}
where $\tilde{D}_{\ell^m,v,d} 
= \varrho_{\ell^m, f}^{-1}\(D_{\ell^m,v,d}\)$.
From \thmref{thm4}, we get
\begin{equation}\label{eqCtildeleq}
\tilde{C}_{\ell^m, v, u} 
~=~ \bigg\{ \sigma  \in \tilde{C}_{\ell^m, v} 
~:~ \sigma|_{\tilde{A}_{\ell^m}} 
= \psi_u|_{\tilde{A}_{\ell^m}} \bigg\}
~\subseteq  
\bigg\{ \sigma  \in \tilde{C}_{\ell^m, v} ~:~ \sigma|_{A_{\ell^m}} 
= \psi_u|_{A_{\ell^m}} \bigg\}
\end{equation}
and equality holds if $\ell$ is sufficiently large.
Recall that (see subsection \ref{SSproofthm4})
$$
\text{Gal}\(K_{\ell^m} / A_{\ell^m}\) 
~=~ {\rm S}_{\ell^m} ~=~ \varrho_{\ell^m, f}^{-1}
\bigg( \tilde{\rho}_{\ell^m,f}\(\G\) \cap \SL_2\(\Z/\ell^m \Z\)\bigg) .
$$
Thus for any $d \in \(\(\Z/\ell^m \Z\)^\times\)^{k-1}$ 
and $\sigma_1, \sigma_2 \in \tilde{D}_{\ell^m,v,d}$, we have 
$$
\sigma_1|_{A_{\ell^m}} 
~=~ \sigma_2|_{A_{\ell^m}}
$$
and for $d' \neq d \in \(\(\Z/\ell^m \Z\)^\times\)^{k-1}$ 
and $\sigma' \in \tilde{D}_{\ell^m,v,d'}$, we have
$$
\sigma_1|_{A_{\ell^m}} 
~\neq~ \sigma'|_{A_{\ell^m}}.
$$
This implies that 
$$
\#\{\sigma|_{A_{\ell^m}} 
~:~ \sigma \in \tilde{C}_{\ell^m, v}\} 
~=~ 
\#\(\(\Z/\ell^m \Z\)^\times\)^{k-1}.
$$
Since $[A_{\ell^m} : \Q] = \#\(\(\Z/\ell^m \Z\)^\times\)^{k-1}$, we get
$$
\text{Gal}\(A_{\ell^m} / \Q\) 
~=~\bigg\{\sigma|_{A_{\ell^m}} ~:~ \sigma \in \tilde{C}_{\ell^m, v}\bigg\}.
$$
Note that $\psi_u|_{A_{\ell^m}} \in \text{Gal}\(A_{\ell^m} / \Q\)$. 
Thus there exists a unique $d_u \in \(\(\Z/\ell^m \Z\)^\times\)^{k-1}$ 
such that
\begin{equation*}
\bigg\{ \sigma  \in \tilde{C}_{\ell^m, v} ~:~ \sigma|_{A_{\ell^m}} 
= \psi_u|_{A_{\ell^m}} \bigg\} 
~=~ \tilde{D}_{\ell^m,v,d_u}.
\end{equation*}
Hence from \eqref{eqCtildeleq}, we conclude that
\begin{equation*}
\#\tilde{C}_{\ell^m, v, u}
~\leq~
\#\tilde{D}_{\ell^m, v, d_u}
\end{equation*}
and equality holds if $\ell$ is sufficiently large.
Also, we have
\begin{equation*}
\tilde{D}_{\ell^m,v, d_u} ~\subseteq~
E_{\ell^m, v, d_u}
\end{equation*}
and equality holds if $\ell$ is sufficiently large 
(see subsection \ref{SSimagerhol}).
Thus by \lemref{lemtrdet}, we have
\begin{equation}\label{eqCtildeest}
\#\tilde{C}_{\ell, v, u} ~=~ \ell^2 + O\(\ell\)
\phantom{mm}\text{and}\phantom{mm}
\#\tilde{C}_{\ell^m, v, u} ~\ll~ \ell^{2m}.
\end{equation}
Also, we have
$$
[L_{\ell^m} : \Q] 
~=~ \frac{[K_{\ell^m} : \Q] 
[\Q\(\z_{\ell^m}\) : \Q]}{[\tilde{A}_{\ell^m} : \Q]}
~=~ \frac{|{\rm G}_{\ell^m}| 
\cdot \varphi(\ell^m)}{[A_{\ell^m} : \Q] 
\cdot [ \tilde{A}_{\ell^m} : A_{\ell^m}]}.
$$
Note that 
$$
[ \tilde{A}_{\ell^m} : A_{\ell^m}] 
~\leq~ r_{\ell^m} ~\leq~ k-1
$$
and $[ \tilde{A}_{\ell^m} : A_{\ell^m}] = 1$ if $\ell$ is sufficiently large.
Applying \eqref{eq|Imgrholm|} and the expression for degree of $A_{\ell^m}$ 
(see subsection \ref{SintKLQ}), 
we conclude that
\begin{equation}\label{eqLlmdegree}
[L_{\ell} : \Q] ~=~ \ell^4 + O\(\ell^3\)
\phantom{mm}\text{and}\phantom{mm}
[L_{\ell^m} : \Q] ~\gg~ \ell^{4m}.
\end{equation}
From \eqref{eqCtildeest} and \eqref{eqLlmdegree}, we deduce that
$$
\delta_{u,v}(\ell)  
~=~ \frac{1}{\ell^2} + O\(\frac{1}{\ell^3}\)
\phantom{mm}\text{and}\phantom{mm} 
\delta_{u,v}(\ell^m) \ll \frac{1}{\ell^{2m}}
$$
for $m \in \N$.

Let $\zeta_{L_{\ell^m}}(s)$ be the Dedekind zeta function 
of $L_{\ell^m}$ and $D_{\ell^m}$ be the absolute value of the discriminant 
of $L_{\ell^m}$. Set $n_{\ell^m} = [L_{\ell^m} : \Q]$. 
We have already proved that $n_{\ell^m} \asymp \ell^{4m}$ and 
by a result of Serre \cite[Prop. 6, p. 130]{Se81}, we get
\begin{eqnarray}\label{equpDisc}
\log D_{\ell^m} ~\ll~ n_{\ell^m} \log n_{\ell^m} ~\ll~ \ell^{5m}.
\end{eqnarray}
By applying a result of Lagarias and Odlyzko \cite[Theorem 1.3]{LO}, we get
\begin{eqnarray}\label{eqChebzetalm}
\pi_f\(x,u,v;\ell^m\) ~=~ \delta_{u,v}(\ell^m) \pi(x) ~+~ 
O\(\Li(x^{\beta})\)+O\(x \exp\(-c_1 \sqrt{\frac{\log x}{n_{\ell^m}}}\)\),
\end{eqnarray}
where $\beta$ is a Seigel zero of $\zeta_{L_{\ell^m}}(s)$ (if it exists) and $c_1> 0$ is an absolute constant. By Stark's bound \cite[p. 148]{St}, there exists an absolute constant $c_2> 0$ such that
$$
\beta ~<~ \max\bigg\{ 1 - \frac{1}{4 \log D_{\ell^m}}, ~ 
1 - \frac{c_2}{{D_{\ell^m}}^{1/n_{\ell^m}}}\bigg\}.
$$
Let $c> 0$ be a sufficiently small absolute constant and suppose that $\ell^m \leq (\log x)^c$. Then using the fact that $n_{\ell^m} \asymp \ell^{4m}$ and \eqref{equpDisc}, we deduce that
$$
\beta < 1 - \frac{1}{(\log x)^{\frac{2}{3}}}.
$$
From \eqref{eqChebzetalm}, we deduce that
\begin{equation}\label{eqpifuvLO}
\pi_f\(x,u,v;\ell^m\) ~=~ \delta_{u,v}(\ell^m) \pi(x) 
~+~ O\(x \exp\(- (\log x)^{\frac13} \)\)
\end{equation}
uniformly for $\ell^m \leq (\log x)^c$. 
Further, if we assume GRH for $\zeta_{L_{\ell^m}}(s)$, then by applying a result of 
Lagarias and Odlyzko \cite[Theorem 1.1]{LO} (also see \cite[Lemma 5.3]{MM84}), 
we deduce that for $x \geq 2$,
\begin{equation}\label{eqpifuvLOGRH}
\pi_f(x,u, v; \ell^m) 
= \delta_{u, v}(\ell^m) \(\pi(x) 
~+~ O\(\ell^{4m} x^{\frac12} \log(\ell^m x)\)\).
\end{equation}
This completes the proof of \thmref{thm3}. \qed

\medspace

\subsection{Proof of \thmref{thm2}}
Let the notations be as before and $g_u(x)$ be the polynomial given by 
$$
g_u(x) ~=~ \prod_{i=1}^{\frac{n}{2}}  
\( x + u^{k-i} + u^{k-n-1+i}\).
$$
Then from \eqref{eqlambdPol}, we have
$$
\lambda_F(p) ~=~ g_p(a_f(p))
$$
for all primes $p$.
Thus we get
\begin{equation}\label{eqpiFinf}
\begin{split}
\pi_F(x, \ell^m) 
&~=~ \#\{ p \leq x ~:~ p \nmid \ell ,~
\lambda_F(p) \equiv 0 ~(\mod \ell^m)\} \\
&~=~ \sum_{u=1 \atop (u, \ell)=1}^{\ell^m-1} 
\#\bigg\{ p \leq x ~:~ p \equiv u ~(\mod \ell^m),~ 
g_u(a_f(p)) \equiv 0 ~(\mod \ell^m)\bigg\}\\
&~=~
\sum_{u=1 \atop (u, \ell)=1}^{\ell^m-1} 
\sum_{\substack{w=0 \\ g_u(w) \equiv 0 (\mod \ell^m)}}^{\ell^m-1} 
\#\bigg\{ p \leq x ~:~ p \equiv u ~(\mod \ell^m),~ 
a_f(p) \equiv w ~(\mod \ell^m)\bigg\}\\
&~=~
\sum_{u=1 \atop (u, \ell)=1}^{\ell^m-1} 
\sum_{\substack{w=0 \\ g_u(w) \equiv 0 (\mod \ell^m)}}^{\ell^m-1} 
\pi_f(x, u, w; \ell^m)
\end{split}
\end{equation}
From \eqref{eqasymppifuv}, we get
$$
\lim_{x \to \infty} \frac{\pi_F(x, \ell^m)}{\pi(x)} ~=~ 
\sum_{u=1 \atop (u, \ell)=1}^{\ell^m-1} 
\sum_{\substack{w=0 \\ g_u(w) \equiv 0 (\mod \ell^m)}}^{\ell^m-1} 
 \delta_{u,w}(\ell^m).
$$
We set
$$
\delta_F(\ell^m) 
~=~\sum_{u=1 \atop (u, \ell)=1}^{\ell^m-1} 
\sum_{\substack{w=0 \\ g_u(w) \equiv 0 (\mod \ell^m)}}^{\ell^m-1}  
\delta_{u,w}(\ell^m).
$$
From \thmref{thm3}, we have
\begin{equation}\label{eqNg}
\delta_F(\ell) 
~=~ \sum_{u=1}^{\ell-1} \sum_{\substack{w=0 \\ 
g_u(w) \equiv 0 (\mod \ell)}}^{\ell-1} 
\frac{1}{\ell^2} \(1+ O\(\frac{1}{\ell}\)\) 
~=~ \(\sum_{u=1}^{\ell-1} N_{g_u}(\ell) \) 
\cdot  \frac{1}{\ell^2} \(1+ O\(\frac{1}{\ell}\)\) 
\end{equation}
and 
\begin{equation}\label{eqNgu}
\delta_F(\ell^m) 
~\ll~ \(\sum_{u=1 \atop (u, \ell)=1}^{\ell^m-1} N_{g_u}(\ell^m) \) 
\cdot \frac{1}{\ell^{2m}}
\end{equation}
where
$$
N_{g_u}(\ell^m) 
~=~ \#\{ 0 \leq w  < \ell^m : g_u(w)  \equiv 0 ~( \mod \ell^m) \}.
$$
We have
$$
g_u(x) ~=~ \prod_{i=1}^{\frac{n}{2}} (x - \gamma_{u,i}),
$$
where $\gamma_{u,i} = -u^{k-i} - u^{k-n-1+i}$. 
Thus
$$
g_u(w) ~\equiv~ 0 ~(\mod \ell) ~~\implies~~ 
w ~\equiv~ \gamma_{u,i} ~(\mod \ell)
$$
for some $1 \leq i \leq n/2$. Hence we get 
$$
N_{g_u}(\ell) ~\leq~ \frac{n}{2}
$$
and equality holds if
$
\gamma_{u,i} ~\not\equiv~ \gamma_{u,j} (\mod \ell)
$ 
for any $i \neq j$. Suppose 
$
\gamma_{u,i} ~\equiv~ \gamma_{u,j} (\mod \ell)
$ 
for some $j < i$, then we have
$$
(u^{i-j}-1)(u^{n+1-i-j} -1 ) ~\equiv~ 0 ~(\mod \ell).
$$
This implies that 
$$u^{r} ~\equiv~ 1 ~(\mod \ell) 
\phantom{m}\text{ for some } 1 \leq r \leq n.
$$
Since
$$
\#\bigg\{ u \in \Z/\ell\Z ~:~ \ord_{\ell}(u) = r \bigg\} 
~=~ \varphi(r),
$$
we get
$$
\#\bigg\{ u \in \Z/\ell\Z ~:~ \ord_{\ell}(u) \leq n \bigg\} 
~\leq~ n^2.
$$
Hence we get
\begin{equation}
\begin{split}
\sum_{u=1}^{\ell-1} N_{g_u}(\ell)  
&~=~
\sum_{u=1 \atop \ord_{\ell}(u) > n}^{\ell-1} N_{g_u}(\ell) 
~+~
\sum_{u=1 \atop \ord_{\ell}(u) \leq n}^{\ell-1} N_{g_u}(\ell) \\
&~=~ \sum_{u=1 \atop \ord_{\ell}(u) > n}^{\ell-1} \frac{n}{2} 
+ O\(n^3\) 
~=~ \frac{n}{2} \cdot \ell \(1 + O\(\frac{1}{\ell}\)\).
\end{split}
\end{equation}
From \eqref{eqNg}, we get
$$
\delta_F(\ell) 
~=~ \frac{n}{2} \cdot \frac{1}{\ell} \(1 + O\(\frac{1}{\ell}\)\).
$$
Now suppose that $m$ is an integer strictly greater than $1$. We estimate
$$
\sum_{u=1 \atop (u, \ell)=1}^{\ell^m-1} N_{g_u}(\ell^m)
$$
as follows. If $g_u(w) \equiv 0 ~( \mod \ell^m)$, then 
there exists a partition $\vec{t} = (t_1, t_2, \cdots, t_{n/2})$ 
of $m$ (with $t_1 \geq t_2 \geq \cdots \geq t_{n/2}$)  
and a permutation $\sigma$ in the symmetric group ${\rm S}_{n/2}$ such that
\begin{equation}\label{wcong}
w ~\equiv~ \gamma_{u,\sigma(i)} ~(\mod \ell^{t_i}) 
\phantom{mm} 
\forall ~1 \leq i \leq n/2.
\end{equation}
If $n=2$, then $N_{g_u}(\ell^m) = 1$ and hence in this case we conclude that
\begin{equation*}
\delta_F(\ell^m) ~\ll~ \frac{1}{\ell^m}.
\end{equation*}
We suppose that $n \geq 4$.
If $t_2 = 0$, then $w \equiv \gamma_{u, i} ~(\mod \ell^{m})$
for some $1 \leq i \leq n/2$. 
Now suppose that $t_2 \geq 1$. Then from \eqref{wcong}, we get
$$
\gamma_{u,\sigma(2)} ~\equiv~ \gamma_{u,\sigma(1)} ~(\mod \ell^{t_2}).
$$
This implies that
$$
u^r ~\equiv~ 1 ~(\mod \ell^{t_2'})
$$
for some $1 \leq r \leq n$ and $t_2' = \lfloor \frac{t_2+1}{2} \rfloor$.  
Note that 
$$
\#\bigg\{1 \leq u < \ell^m ~:~ (u,\ell)=1, ~\ord_{\ell^{t_2'}}(u) \leq n \bigg\} 
~\leq~ n^2 \ell^{m-t_2'}.
$$
Let 
$$
\cP_{n/2}(m) ~=~ 
\left\{
\vec{s} = (s_1, s_2, \cdots, s_{n/2}) \in \Z_{\geq 0}^{n/2} 
~:~ s_1 \geq s_2 \geq \cdots \geq s_{n/2} \geq 0,~~~
\sum_{i=1}^{n/2} s_i ~=~ m
\right\}
$$
be the set of partitions of $m$ into $n/2$ parts.
For integers $1 \leq t_2 \leq t_1\leq m$ with $t_1+t_2 \leq m$, 
let $\cS(t_1, t_2)$ be the set of partitions 
$\vec{s}\in \cP_{n/2}(m)$ with $(s_1, s_2) = (t_1, t_2)$. 
We set
\begin{equation*}
\begin{split}
N_{g_u, (t_1, t_2)}(\ell^m) 
~=~ 
&\#\biggl\{ 0 \leq w < \ell^m ~:~ 
w \equiv \gamma_{u,\sigma(i)} ~(\mod \ell^{s_i}) 
~\forall~ 1 \leq i \leq n/2 
\text{ and for some } \sigma\in {\rm S}_{n/2}  \\
& \phantom{mm} \text{ and }  
\vec{s} \in \cS(t_1, t_2) \biggr\}.	
\end{split}
\end{equation*}
As before, if $\ord_{\ell^{t_2'}}(u) > n$,  then $N_{g_u,(t_1, t_2)}(\ell^m)=0$. 
Further, we have
\begin{equation}
\begin{split}
N_{g_u,(t_1, t_2)}(\ell^m)  
&~\leq~ \#\bigg\{0 \leq w < \ell^m ~:~ 
w \equiv \gamma_{u,\sigma(1)} ~(\mod \ell^{t_1}) 
\text{ for some } \sigma \in {\rm S}_{n/2} \bigg\} \\
& ~\leq~ \frac{n}{2} ~\ell^{m-t_1}.
\end{split}
\end{equation}
Hence we get
\begin{eqnarray}\label{eqNgut1t2}
\sum_{u=1 \atop (u, \ell)=1}^{\ell^m-1}  N_{g_u}(\ell^m)
&~\leq~&  
\sum_{u=1 \atop (u, \ell)=1}^{\ell^m-1} N_{g_u, (m,0)}(\ell^m) 
~+~ \sum_{u=1 \atop (u, \ell)=1}^{\ell^m-1} 
\sum_{1 \leq t_2 \leq t_1 \leq m \atop  \cS(t_1, t_2) \neq \emptyset} 
N_{g_u, (t_1,t_2)}(\ell^m) \nonumber \\
&~\leq~& 
\frac{n}{2}~ \ell^m ~ +~
\frac{n^3}{2}
 \sum_{1 \leq t_2 \leq t_1 \leq m \atop \cS(t_1, t_2) 
\neq \emptyset} 
 \ell^{m-t_1}  \ell^{m-t_2'} \nonumber \\
&~\ll~&  
\ell^m ~+~ \ell^{2m}\sum_{1 \leq t_2 \leq t_1 \leq m \atop \cS(t_1, t_2)  
\neq \emptyset}  
\frac{1}{\ell^{t_1+t_2'}}.
\end{eqnarray}
We note that 
\begin{equation}\label{eqmint1+t2'}
\min\bigg\{
s_1 + \bigg\lfloor \frac{s_2+1}{2} \bigg\rfloor 
~:~ \vec{s} = (s_1, s_2, \cdots, s_{n/2}) \in \cP_{n/2}(m)\bigg\} 
~\geq~ 
\frac{3m}{n} .
\end{equation}
Further, if 
$$
m ~=~  \frac{n}{2}~q  ~+~ i
$$
for $0 \leq i < n/2$, 
then the minimum of 
$\bigg\{
s_1 + \big\lfloor \frac{s_2+1}{2} \big\rfloor 
~:~ \vec{s} \in \cP_{n/2}(m)\bigg\} $
is attained at
$$
\vec{s} ~=~ (\underbrace{q+1, \cdots, q+1}_{i \text{ times}}, 
\underbrace{q, \cdots, q}_{\(\frac{n}{2}-i\) \text{ times}})  
\in ~\cP_{n/2}(m).
$$
From \eqref{eqNgu}, \eqref{eqNgut1t2} and \eqref{eqmint1+t2'}, 
we conclude that
$$
\delta_F(\ell^m) ~\ll~ \frac{m^2}{\ell^{\frac{3m}{n}}}
$$
provided $n > 2$. The second part of \thmref{thm2} 
follows from \eqref{eqpifuvLO}, \eqref{eqpifuvLOGRH} and \eqref{eqpiFinf}. 
This completes the proof of \thmref{thm2}.   \qed

\medspace

\section*{Acknowledgments}
The authors would like to thank the referee for valuable suggestions.
The first author would like to thank DAE number theory plan project.
The second author would like to acknowledge 
the Institute of Mathematical Sciences (IMSc), India and 
Queen's University, Canada for providing excellent atmosphere to work.

\medspace

\end{document}